\newcommand{\exchangesymbols}[2]{%
  \expandafter\mathchardef\expandafter#1\number\expandafter#2\expandafter\relax
\expandafter\mathchardef\expandafter#2\number#1%
}
\exchangesymbols{\phi}{\varphi}
\exchangesymbols{\epsilon}{\varepsilon}
\exchangesymbols{\emptyset}{\varnothing}
\DeclareFontFamily{U}{mathb}{\hyphenchar\font45}
\DeclareFontShape{U}{mathb}{m}{n}{
<-6> mathb5 <6-7> mathb6 <7-8> mathb7
<8-9> mathb8 <9-10> mathb9
<10-12> mathb10 <12-> mathb12
}{}
\DeclareSymbolFont{mathb}{U}{mathb}{m}{n}
\DeclareMathSymbol{\pprec}{\mathrel}{mathb}{"CE}
\DeclareMathSymbol{\ssucc}{\mathrel}{mathb}{"CF}
\newcommand\bigcdot{\mathpalette\bigcdot@{.5}}
\newcommand*\bigcdot@[2]{\mathbin{\vcenter{\hbox{\scalebox{#2}{$\m@th#1\bullet$}}}}}
\newcommand{\term}[1]{\emph{#1}}
\newcommand{\I}{\mathcal{I}}
\newcommand{\B}{\mathcal{B}}
\newcommand{\Hil}{\mathcal{H}}
\newcommand{\K}{\mathcal{K}}
\newcommand{\finrank}{\mathcal{F}}
\newcommand{\traceclass}{\mathcal{L}_1}
\newcommand{\hilbertschmidt}{\mathcal{L}_2}
\newcommand{\charset}{\Sigma}
\newcommand{\cz}{c_0}
\newcommand{\czstar}{\cz^{*}}
\newcommand{\nats}{\mathbb{N}}
\newcommand{\ints}{\mathbb{Z}}
\newcommand{\complex}{\mathbb{C}}
\newcommand{\ideal}{\delim<>}
\newcommand{\paren}{\delim()}
\newcommand{\set}{\delim\{\}}
\newcommand{\setb}{\delimpair\{{[m]\vert}\}}
\newcommand{\innerprod}{\delimpair<{[.],}>}
\newcommand{\seq}{\delim()}
\newcommand{\ceil}{\delim\lceil\rceil}
\newcommand{\matrep}{\delim()}
\newcommand{\amclosure}[1]{\closure[am]{#1}}
\newcommand{\ep}{E_{\mathcal{P}}}
\newcommand{\conj}[1]{\overline{#1}}
\newcommand{\sph}{\mathbb{S}}
\DeclareMathOperator{\diag}{diag}
\DeclareMathOperator{\rank}{rank}
\DeclareMathOperator{\trace}{Tr}
\DeclareMathOperator{\spans}{span}
\newcommand{\closure}[2][]{\overline{#2}^{#1}}
\DeclareMathOperator{\range}{ran}
\setlist[enumerate]{label=\textup{(\roman*)}}
\newcommand{\fnmark}[1]{%
  \footnotemark%
  \newcounter{#1}%
  \setcounter{#1}{\value{footnote}}%
}
\theoremstyle{plain} 
\newtheorem{theorem}{Theorem}
\theoremstyle{definition}
\newtheorem{definition}[theorem]{Definition}
\newtheorem{example}[theorem]{Example}
\theoremstyle{remark}
\newtheorem*{remark}{Remark}
\newcounter{case}
\makeatletter\@addtoreset{case}{theorem}\makeatother
\title{Matrix splitting and ideals in $\B(\Hil)$}
\author{Jireh Loreaux}
\email{jloreau@siue.edu}
\author{Gary Weiss$^{\dagger}$}
\email{gary.weiss@uc.edu}
\thanks{$^{\dagger}$Partially supported by Simons Foundation grants 245014 and 63655}
\begin{document}

\keywords{operator ideals, diagonals, arithmetic mean closed, block tridiagonal}
\subjclass[2020]{Primary 47B10, 47L20; Secondary 15A42 47B07.}

\maketitle

\begin{abstract}
  We investigate the relationship between ideal membership of an operator and its pieces relative to several canonical types of partitions of the entries of its matrix representation with respect to a given orthonormal basis.
  Our main theorems establish that if $T$ lies in an ideal $\I$, then $\sum P_n T P_n$ (or more generally $\sum Q_n T P_n$) lies in the arithmetic mean closure of $\I$ whenever $\set{P_n}$ (and also $\set{Q_n}$) is a sequence of mutually orthogonal projections; and in any basis for which $T$ is a block band matrix, in particular, when in Patnaik--Petrovic--Weiss universal block tridiagonal form, then all the sub/super/main-block diagonals of $T$ are in $\I$.
  And in particular, the principal ideal generated by this $T$ is the finite sum of the principal ideals generated by each sub/super/main-block diagonals.
\end{abstract}

\section{Introduction}

In the study of infinite matrix representations of operators in $\B(\Hil)$, and especially the structure of commutators, it is common and natural to split up a target operator $T$ into two (or finite) sum of natural parts.
For example, every finite matrix is the sum of its upper triangular part and its lower triangular part (including the diagonal in either part as you choose).

Formally this obviously holds also for infinite matrices, but not in $\B(\Hil)$.
That is, as is well-known, the upper or lower triangular part of a matrix representation for a bounded operator is not necessarily a bounded operator.
The Laurent operator with zero-diagonal matrix representation $\matrep{\frac{1}{i-j}}_{i \neq j}$ represents a bounded operator but its upper and lower triangular parts represent unbounded operators.
From this we can produce a compact operator whose upper triangular part is unbounded.

\begin{example}
  \label{ex:bdd-upp-triangular-unbdd}
  Consider the zero-diagonal Laurent matrix $\matrep{\frac{1}{i-j}}_{i \neq j}$, which corresponds to the Laurent multiplication operator $M_{\phi} \in \B(L^2(\sph^1))$ where
  \begin{equation*}
    \phi(z) := \sum_{0 \neq n \in \ints} \frac{z^n}{n} = \sum_{n=1}^{\infty} \frac{z^n}{n} - \sum_{n=1}^{\infty} \frac{\conj{z}^n}{n} = \log(1-z) - \log(1-\conj{z}) = \log\paren*{\frac{1-z}{\conj{1-z}}},
  \end{equation*}
  which is bounded since the principle logarithm of a unit modulus function $\phi \in L^{\infty}(\sph^1)$.
  On the other hand, the upper triangular part $\Delta(M_{\phi})$ of $M_{\phi}$ corresponds to multiplication by $\log(1-z) \notin L^{\infty}(\sph^1)$, and is therefore not a bounded operator.
  Additionally, as is well-known, the same boundedness/unboundedness properties are shared by the corresponding Toeplitz operator $T_{\phi}$ and its $\Delta(T_{\phi})$.
  Indeed, this follows from the fact that if $P \in \B(L^2(\sph^1))$ is the projection onto the Hardy space $H^2$, then $P M_{\phi} P$ and $P^{\perp} M_{\phi} P^{\perp}$ are unitarily equivalent, and $P M_{\phi} P^{\perp} = P \Delta(M_{\phi}) P^{\perp}$ is bounded.

  To produce a compact operator whose upper triangular part is unbounded, take successive corners $P_n T_{\phi} P_n$ where $P_n$ is the projection onto $\spans \set{e_1,\ldots,e_n}$.
  Since $\norm{P_n T_{\phi} P_n} \uparrow \norm{T_{\phi}}$ and $\norm{P_n \Delta(T_{\phi}) P_n} \uparrow \infty$, then $\bigoplus_{n=1}^{\infty} \frac{P_n T_{\phi} P_n}{\norm{P_n \Delta(T_{\phi}) P_n}^{1/2}}$ is compact and its upper triangular part is unbounded.
  Similarly, $\bigoplus_{n=1}^{\infty} \frac{P_n T_{\phi} P_n}{\norm{P_n \Delta(T_{\phi}) P_n}}$ is compact but its upper triangular part is bounded and noncompact.
\end{example}

Focusing attention on $\B(\Hil)$ ideals yields a fruitful area of study:
for a Hilbert--Schmidt operator, in any basis, any partition of the entries of its matrix representation has its parts again Hilbert--Schmidt.\footnotemark{}
This leads to a natural question for which the authors are unaware of the answer: is the Hilbert--Schmidt ideal the \emph{only} (nonzero) ideal with this property?

\footnotetext{%
  Of course, for any ideal $\I$ contained within the Hilbert--Schmidt ideal $\hilbertschmidt$, and any $T \in \I$, the upper triangular part $\Delta(T) \in \hilbertschmidt$, but one may wonder if anything stronger can be said.
  In the case of the trace-class ideal $\traceclass$, Gohberg--Krein \cite[Theorem~III.2.1]{GK-1970} showed that $\Delta(T)$, in the terminology of \cite{DFWW-2004-AM}, lies in the arithmetic mean closure of the principal ideal generated by $\diag(\frac{1}{n})$.%
}

For the compact operators $\K(\Hil)$, depending on the shape of the matrix parts for $T$, the problem of determining when its parts are in $\K(\Hil)$ (i.e., ideal invariant) can be a little subtler.
Indeed, as noted in \Cref{ex:bdd-upp-triangular-unbdd}, the upper triangular part of a compact operator may not be compact (nay bounded);
on the other hand, it is well-known and elementary that the diagonal sequence $\seq{d_n}$ of a compact operator converges to zero (i.e., $\diag \seq{d_n}$ is compact), and the same holds for all the sub/super-diagonals as well.
In contrast, this fails for certain matrix representations for a finite rank operator;
that is, the diagonal of a finite rank operator may not be finite rank (e.g., $(\frac{1}{ij})_{i,j \ge 1}$ is rank-1 but its diagonal $\diag(\frac{1}{j^2}) \notin \finrank(\Hil)$).

Here we study this question for general $\B(\Hil)$-ideals: For an ideal $\I$ and all pairs $\set{P_n}, \set{Q_n}$ of sequences of mutually orthogonal projections, when are the generalized diagonals $\sum Q_n T P_n \in \I$ whenever $T \in \I$? (The canonical block diagonals are $\sum P_{n+k} T P_n$ and $\sum P_n T P_{n+k}$.)
We find this especially pertinent in our current search for commutator forms of compact operators \cite{PPW-2021}, growing out of \cite{BPW-2014-VLOT}; and, in view of the second author’s work with V. Kaftal \cite{KW-2011-IUMJ} on diagonal invariance for ideals, useful in recent discoveries by the second author with S. Petrovic and S. Patnaik \cite{PPW-2020-TmloVLOt} on their universal finite-block tridiagonalization for arbitrary $\B(\Hil)$ operators and the consequent work on commutators \cite{PPW-2021}.

Evolution of questions:
\begin{enumerate}
\item For which $\B(\Hil)$-ideals $\I$ does a tridiagonal operator $T$ have its three diagonal parts also in $\I$?
  This question arose from the stronger question: for which tridiagonal operators $T \in \K(\Hil)$ are the diagonals parts in $\ideal{T}$?
  \Cref{thm:bandable} guarantees the latter is always true, even for finite band operators.
\item The same questions but more generally for a block tridiagonal $T$ (see \Cref{def:block-decomposition}) and its three block diagonals (see \Cref{def:shift-representation}).
  Again, \Cref{thm:bandable} guarantees this is always true, and likewise for finite block band operators.
  That is, if
  \(
  T =
  \begin{pNiceMatrix}[small,xdots/line-style=solid]
    B & A & 0 & {} \\[-1em]
    C & \Ddots & \Ddots & \Ddots \\[-1em]
    0 & \Ddots & & {} \\[-1em]
    & \Ddots & & {} \\
  \end{pNiceMatrix}
  \in \I  
  \),
  then
  \(
  \begin{pNiceMatrix}[small,xdots/line-style=solid]
    0 & A & 0 & {} \\[-1em]
    0 & \Ddots & \Ddots & \Ddots \\[-1em]
    0 & \Ddots & & {} \\[-1em]
    & \Ddots & & {} \\
  \end{pNiceMatrix}
  \in \I  
  \),
  and similarly for $B,C$.
\item A more general context: given two sequences of (separately) mutually orthogonal projections, $\set{P_n}_{n=1}^{\infty}, \set{Q_n}_{n=1}^{\infty}$, for $T \in \I$ what can be said about ideal membership for $\sum_{n=1}^{\infty} Q_n T P_n$?
  In \Cref{thm:sum-off-diagonal-corners-am-closure} we establish that $\sum_{n=1}^{\infty} Q_n T P_n$ always lies in the arithmetic mean closure $\amclosure{\I}$ defined in \cite{DFWW-2004-AM} (see herein \cpageref{def:am-closed}).
  This follows from a generalization (see \Cref{thm:fans-theorem-pinching}) of Fan's famous submajorization theorem \cite[Theorem~1]{Fan-1951-PNASUSA} concerning partial sums of diagonals of operators.
\end{enumerate}

Throughout the paper we will prefer bi-infinite sequences (i.e., indexed by $\ints$ instead of $\nats$) of projections, but this is only to make the descriptions simpler;
we will not, however, use the term \term{bi-infinite} unless necessary for context.
The projections are allowed to be zero, so this is no restriction.
We first establish some terminology.

\begin{definition}
  \label{def:block-decomposition}
  A sequence $\set{P_n}_{n \in \ints}$ of mutually orthogonal projections $P_n \in \B(\Hil)$ for which $\sum P_n = I$ is a \term{block decomposition} and for $T \in \B(\Hil)$, partitions it into a (bi-)infinite matrix of operators $T_{i,j} := P_i T P_j$.

  We say that an operator $T$ is a \term{block band operator relative to $\set{P_n}$} if there is some $M \ge 0$, called the \term{block bandwidth}, for which $T_{i,j} = 0$ whenever $\abs{i - j} > M$.
  If $M=0$ (resp. $M=1$), we say $T$ is \emph{block diagonal (resp. block tridiagonal) relative to $\set{P_n}$}.
  
  Finally, in all the above definitions, if $\trace P_n \le 1$ for all $n \in \ints$, which, up to a choice of phase for each range vector, simply corresponds to a choice of orthonormal basis, then we omit the word ``block.''
  In this case, the operators $T_{i,j}$ are scalars and $\matrep{T_{i,j}}$ is the matrix representation (again, up to a choice of phase for each vector) for $T$ relative to this basis.

  If $\set{Q_n}_{n \in \ints}$ is an (unrelated) block decomposition, the pair $\set{P_n}_{n \in \ints}, \set{Q_n}_{n \in \ints}$ still determines a (bi-)infinite matrix of operators $T_{i,j} = Q_i T P_j$, but this time there is an inherent asymmetry in that $(T^{*})_{i,j} \neq (T_{j,i})^{*}$.
  In this case, the terms defined just above may be modified with the adjective ``asymmetric.''
\end{definition}

\begin{definition}
  \label{def:shift-representation}
  Suppose that $\set{P_n}_{n \in \ints}$ is a block decomposition for an operator $T \in \B(\Hil)$.
  For each $k \in \ints$, we call 
  \begin{equation*}
    T_k := \sum_{n \in \ints} T_{n,n+k} = \sum_{n \in \ints} P_n T P_{n+k}
  \end{equation*}
  the \term{$k^{\mathrm{th}}$ block diagonal} of $T$, which converges in the strong operator topology.
  Visually, these operators may be described with the following diagram\footnotemark{}:
  \begin{center}
    \includegraphics{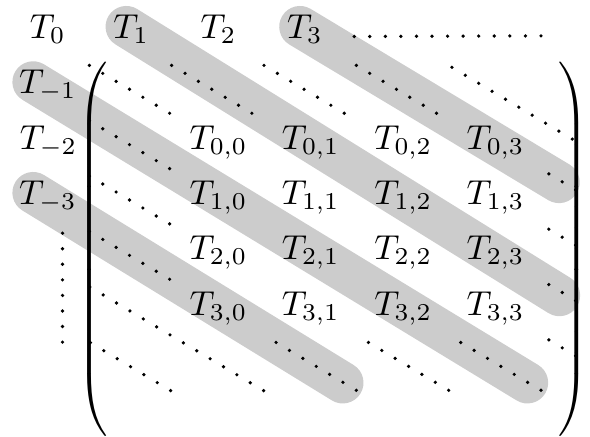}
  \end{center}
  \footnotetext{%
    For the case when the projections $P_n = 0$ for $n \in \ints \setminus \nats$, the matrix below is uni-infinite.
    This recovers uni-infinite matrix results from the bi-infinite approach we described in the paragraph preceding \Cref{def:block-decomposition}.%
  }%
  
  We call the collection $\set{T_k}_{k \in \ints}$ the \term{shift decomposition} of $T$ (relative to the block decomposition $\set{P_n}_{n \in \ints}$).
  The \term{asymmetric shift decomposition} $\set{T_k}_{k \in \ints}$ relative to \emph{different} block decompositions $\set{P_n}_{n \in \ints}, \set{Q_n}_{n \in \ints}$ is given by
  \begin{equation*}
    T_k := \sum_{n \in \ints} Q_n T P_{n+k}.
  \end{equation*}
  We note for future reference that sums of the above form don't require the sequences of projections to sum to the identity in order to converge in the strong operator topology, only that each sequence consists of mutually orthogonal projections.
  Moreover, it is elementary to show that when $T$ is compact, so is $T_k$ for all $k \in \ints$.
\end{definition}

\begin{remark}
  \label{rem:shift-decomposition-explanation}
  Although one has the formal equality $T = \sum_{k \in \ints} T_k$ in the sense that $T$ is uniquely determined by $\set{T_k}_{k \in \ints}$, this sum doesn't necessarily converge even in the weak operator topology \cite{Mer-1985-PAMS}, hence it doesn't converge in any of the usual operator topologies.
  If $\rank P_n = 1$ (and $Q_n = P_n$) for all $n \in \ints$ then $\sum_{k \in \ints} T_k$ does converge to $T$ in the \term{Bures topology}\footnotemark{} \cite{Bur-1971,Mer-1985-PAMS}.
  On the other hand, if $T$ is a block band operator relative to this block decomposition, then convergence is irrelevant: $T = \sum_{k=-M}^M T_k$.

  \footnotetext{%
    The Bures topology on $B(\Hil)$ is a locally convex topology constructed from the (rank-1) projections $P_n$ as follows.
    Let $\mathcal{D} = \bigoplus_{n \in \ints} P_n B(\Hil) P_n$ be the algebra of diagonal matrices and $E : B(\Hil) \to \mathcal{D}$ the conditional expectation given by $T \mapsto T_0 := \sum_{n \in \ints} P_n T P_n$.
    Then to each $\omega \in \ell_1 \cong \mathcal{D}_{*}$, associate the seminorm $T \mapsto \trace(\diag(\omega) E(T^{*}T)^{\frac{1}{2}})$, where $\diag : \ell_{\infty} \to \mathcal{D}$ is the natural *-isomorphism.
    These seminorms generate the Bures topology.
  }

  The reason for our ``shift'' terminology in \Cref{def:shift-representation} is that if the block decomposition $\set{P_n}_{n \in \ints}$ consists of rank-1 projections, then the operators $T_k$ have the form $T_k = U^k D_k$ where $D_k$ are diagonal operators and $U$ is the bilateral shift relative to any orthonormal basis corresponding to $\set{P_n}_{n \in \ints}$.
\end{remark}

\begin{remark}
  \label{rem:tridiagonalizability}
  All compact selfadjoint operators are diagonalizable via the spectral theorem.
  However, this is certainly not the case for arbitrary selfadjoint operators, the selfadjoint approximation theorem of Weyl--von Neumann notwithstanding.
  Nevertheless, every selfadjoint operator with a cyclic vector is \emph{tri}diagonalizable;
  for $T = T^{*}$ with cyclic vector $v$, apply Gram--Schmidt to the linearly independent spanning collection $\set{T^n v}_{n=0}^{\infty}$ and then $T$ is tridiagonal in the resulting orthonormal basis.
  Consequently, every selfadjoint operator is block diagonal with each nonzero block in the direct sum itself tridiagonal.

  The second author, along with Patnaik and Petrovic \cite{PPW-2020-TmloVLOt,PPW-2021}, recently established that every bounded operator is \emph{block} tridiagonalizable, meaning $T = T_{-1} + T_0 + T_1$, hence block banded (with block bandwidth $1$) and with finite block sizes growing no faster than exponential.
\end{remark}

Our first main theorem is an algebraic equality of ideals for block band operators relative to some block decomposition.

\begin{theorem}
  \label{thm:bandable}
  Let $T \in \B(\Hil)$ be an asymmetric block band operator of bandwidth $M$ relative to the block decompositions $\set{P_n}_{n \in \ints}, \set{Q_n}_{n \in \ints}$, and let $\set{T_k}_{k=-M}^M$ be the asymmetric shift decomposition of $T$.
  Then the following ideal equality holds:
  \begin{equation*}
    \ideal{T} = \sum_{k=-M}^M \ideal{T_k}.
  \end{equation*}
\end{theorem}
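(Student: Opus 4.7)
The argument splits into two inclusions. The easy direction $\ideal{T} \subseteq \sum_{k=-M}^M \ideal{T_k}$ is immediate: since $T$ is an asymmetric block band operator of bandwidth $M$, \Cref{rem:shift-decomposition-explanation} yields the honest finite sum $T = \sum_{k=-M}^M T_k$, placing $T$ in the right-hand side.

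For the reverse inclusion, the plan is to extract each individual block diagonal $T_k$ from $T$ by a finite Fourier/character-sum argument. Set $\omega := e^{2\pi i / (2M+1)}$ and define the diagonal unitaries
\begin{equation*}
  U_P := \sum_{n \in \ints} \omega^n P_n, \qquad U_Q := \sum_{n \in \ints} \omega^n Q_n,
\end{equation*}
which converge in the strong operator topology and are unitary precisely because $\set{P_n}$ and $\set{Q_n}$ are each sequences of mutually orthogonal projections summing to $I$. A direct computation shows that $U_Q^{-j} T U_P^{j}$ rescales each block $Q_m T P_n$ by the scalar $\omega^{j(n-m)}$, and the band hypothesis means only indices with $|n-m| \le M$ contribute.

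The key step is to verify the extraction identity
\begin{equation*}
  T_k = \frac{1}{2M+1} \sum_{j=0}^{2M} \omega^{-jk} \, U_Q^{-j} \, T \, U_P^{j}.
\end{equation*}
Swapping sums reduces this to the character-sum identity that $\sum_{j=0}^{2M} \omega^{j(n-m-k)}$ equals $2M+1$ when $(2M+1) \mid (n-m-k)$ and vanishes otherwise; the bandwidth estimate $|n-m-k| \le 2M < 2M+1$ then collapses the divisibility condition to $n = m + k$, isolating exactly $(2M+1) T_k$. Since each summand $U_Q^{-j} T U_P^{j}$ plainly lies in $\ideal{T}$, so does the finite linear combination on the right, yielding $T_k \in \ideal{T}$ and hence $\sum_{k=-M}^M \ideal{T_k} \subseteq \ideal{T}$.

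I do not anticipate a serious obstacle: the bandwidth hypothesis is used essentially to kill cross-residues in the character sum, and the completeness $\sum P_n = \sum Q_n = I$ is what makes $U_P, U_Q$ genuine unitaries (rather than mere partial isometries on which one would have to argue more carefully). Dropping either hypothesis would break the extraction, which also explains why this clean algebraic equality is tailored to block band operators and must weaken to the arithmetic-mean-closure statement of \Cref{thm:sum-off-diagonal-corners-am-closure} in the general setting.
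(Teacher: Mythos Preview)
Your proof is correct, but it takes a different route from the paper's. The paper extracts $T_k$ by sandwiching $T$ between projections onto residue classes modulo $2M+1$: with $S_j := \sum_{n} Q_{n(2M+1)+j}$ and $R_{k,j} := \sum_{n} P_{n(2M+1)+j+k}$, the bandwidth hypothesis kills all cross terms in $S_j T R_{k,j}$, and summing over $j=-M,\dots,M$ reassembles $T_k$ as $\sum_j S_j T R_{k,j} \in \ideal{T}$. Your argument is the Fourier-dual of this: rather than projecting onto residue classes, you average against characters of $\ints/(2M+1)\ints$ via the diagonal unitaries $U_P, U_Q$. Both approaches exploit the same mod-$(2M+1)$ periodicity, and your character-sum identity is exactly the orthogonality relation underlying the paper's residue-class decomposition. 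The paper's version is slightly more elementary in that it stays entirely within projections and real coefficients; your version is arguably more conceptual and recognizable as the standard ``extract a Fourier coefficient by averaging'' trick. One small quibble: your closing remark that completeness $\sum P_n = \sum Q_n = I$ is essential overstates things a bit---replacing $U_Q^{-j}$ by $U_Q^{*j}$ the computation goes through verbatim for mere partial isometries, so the extraction really only needs mutual orthogonality, just as in the paper's proof.
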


\begin{proof}
  The proof is essentially due to the following observation:
  if you zoom out and squint, then a band matrix looks diagonal.
  That is, we exploit the relative thinness of the diagonal strip of support entries.

  The ideal inclusion $\ideal{T} \subseteq \sum_{k=-M}^M \ideal{T_k}$ is obvious since $T = \sum_{k=-M}^M T_k$.
  Therefore it suffices to prove $T_k \in \ideal{T}$ for each $-M \le k \le M$.

  Indeed, for $-M \le j,k \le M$ define projections $R_{k,j} := \sum_{n \in \ints} P_{n(2M+1) + j + k}$ and $S_j := \sum_{n \in \ints} Q_{n(2M+1) + j}$
  Then whenever $n \neq m$, $Q_{n(2M+1) + j} T P_{m(2M+1) + j + k} = 0$ since the bandwidth of $T$ is $M$ and
  \begin{align*}
    \abs{\paren!{n(2M+1) + j} - \paren!{m(2M+1) + j + k} }
    &\ge \abs{n-m}(2M+1) - k \\
    &\ge (2M+1) - M > M.
  \end{align*}
  Therefore, for each $k,j$,
  \begin{equation*}
    S_j T R_{k,j} = \sum_{n \in \ints} Q_{n(2M+1) + j} T P_{n(2M+1) + j + k}
  \end{equation*}
  converges in the strong operator topology, and summing over $j$ yields
  \begin{equation*}
    \sum_{j=-M}^M S_j T R_{k,j} = \sum_{j=-M}^{M} \sum_{n \in \ints} Q_{n(2M+1) + j} T P_{n(2M+1) + j + k} = \sum_{n \in \ints} Q_n T P_{n+k} = T_k.
  \end{equation*}
  As a finite sum, the left-hand side is trivially in $\ideal{T}$ and therefore so is each $k^{\mathrm{th}}$ generalized block diagonal $T_k$.
\end{proof}

Before establishing our second main theorem (\Cref{thm:sum-off-diagonal-corners-am-closure}), we acquaint the reader with the prerequisite ideas concerning Fan's theorem \cite[Theorem~1]{Fan-1951-PNASUSA}, Hardy--Littlewood submajorization, fundamentals of the theory of operator ideals and arithmetic mean closed ideals, all of which are intimately related.

For a single operator, Fan's submajorization theorem \cite[Theorem~1]{Fan-1951-PNASUSA} states that if the matrix representation for a compact operator $T \in \K(\Hil)$ has diagonal sequence $\seq{d_j}_{j \in J}$ (with any index set $J$), then 
\begin{equation}
  \label{eq:submajorization}
  \sum_{n=1}^m \abs{d_n}^{*} \le \sum_{n=1}^m s_n(T) \quad\text{for all } m \in \nats,
\end{equation}
where $s(T) := \seq{s_n(T)}_{n \in \nats}$ denotes the (monotone) \term{singular value sequence} of $T$, and where $\seq{\abs{d_n}^{*}}_{n \in \nats}$ denotes the \term{monotonization}\footnotemark{} of the (possibly unordered) sequence $\seq{\abs{d_j}}_{j \in J}$;
the monotonization is always an element of the convex cone $\czstar$ of nonnegative nonincreasing sequences (indexed by $\nats$) converging to zero, even when $\seq{\abs{d_j}}_{j \in J}$ is indexed by another set $J$ different from $\nats$.
The set of inequalities \eqref{eq:submajorization} may be encapsulated, for pairs of sequences in $\czstar$, by saying that $\seq{\abs{d_n}^{*}}$ is \term{submajorized} by $s(T)$, which is often denoted $\seq{\abs{d_n}^{*}} \pprec s(T)$, although the precise notation for submajorization varies throughout the literature.
We remark the trivial fact that the submajorization order is finer than the usual pointwise order on $\czstar$;
that is, $\seq{a_n} \le \seq{b_n}$ implies $\seq{a_n} \pprec \seq{b_n}$ for any $\seq{a_n}, \seq{b_n} \in \czstar$.

\footnotetext{%
  This is the measure-theoretic \term{nonincreasing rearrangement} relative to the counting measure on the index set, say $J$, of $\seq{\abs{d_n}}$.
  Associated to this, there is a injection (not necessarily a bijection) $\pi : \nats \to J$ with $d^{-1}(\complex\setminus\set{0}) \subseteq \pi(\nats)$ such that $\abs{d_n}^{*} = \abs{d_{\pi(n)}}$.
  This of course requires $0 \notin (d \circ \pi)(\nats)$ when $d^{-1}(\complex\setminus\set{0})$ is infinite since $\seq{\abs{d_n}^{*}}$ is nonincreasing.
}

However, we view Fan's theorem in a slightly different way which is more amenable to our purposes.
In particular, consider the canonical trace-preserving conditional expectation\footnotemark{} $E : \B(\Hil) \to \mathcal{D}$ onto the masa (maximal abelian selfadjoint algebra) of diagonal operators relative to a fixed, but arbitrary, orthonormal basis.
Then the sequence $\seq{\abs{d_n}^{*}}$ is simply $s(E(T))$, and in this language:

\footnotetext{%
  For an inclusion of unital C*-algebras $\mathcal{B} \subseteq \mathcal{A}$ (with $1_{\mathcal{B}} = 1_{\mathcal{A}}$), a \term{conditional expectation of $\mathcal{A}$ onto $\mathcal{B}$} is a unital positive linear map $E : \mathcal{A} \to \mathcal{B}$ such that $E(bab') = bE(a)b'$ for all $a \in \mathcal{A}$ and $b,b' \in \mathcal{B}$.
  A conditional expectation is called \term{faithful} if $a \ge 0$ and $E(a) = 0$ imply $a = 0$.
  If $\mathcal{A}$ is a semifinite von Neumann algebra with a faithful normal semifinite trace $\tau$, then the expectation is said to be \term{trace-preserving} if $\tau(a) = \tau(E(a))$ for all $a \in \mathcal{A}_+$.%
}

\begin{theorem}[\protect{\cite[Theorem~1]{Fan-1951-PNASUSA}}]
  \label{thm:fans-theorem}
  If $T \in \K(\Hil)$ and $E : \B(\Hil) \to \mathcal{D}$ is the canonical conditional expectation onto a masa of diagonal operators, then
  \begin{equation*}
    s(E(T)) \pprec s(T),
  \end{equation*}
  that is, $s(E(T))$ is submajorized by $s(T)$.
\end{theorem}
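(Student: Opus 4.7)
My plan is to deduce the submajorization inequality $\sum_{n=1}^m \abs{d_n}^{*} \le \sum_{n=1}^m s_n(T)$ directly from the Ky Fan maximum principle, which characterizes the $m^{\mathrm{th}}$ Ky Fan norm of a compact operator as
\begin{equation*}
  \sum_{n=1}^m s_n(T) = \sup \set*{\sum_{n=1}^m \abs{\langle T x_n, y_n\rangle} : \set{x_n}_{n=1}^m, \set{y_n}_{n=1}^m \text{ orthonormal in } \Hil}.
\end{equation*}
This variational formula is essentially equivalent to Fan's Theorem~1 itself and in any case can be derived from the classical minimax characterization of singular values together with polar decomposition, so I will take it as known.

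Fix $m \in \nats$ and let $\set{e_j}_{j \in J}$ denote the orthonormal basis associated to the masa $\mathcal{D}$, so that $d_j = \langle T e_j, e_j\rangle$. By the footnote attached to the definition of the monotonization, there is an injection $\pi \colon \nats \to J$ satisfying $\abs{d_n}^{*} = \abs{d_{\pi(n)}}$ for every $n$. Next I would choose unimodular scalars $\alpha_n \in \complex$ with $\alpha_n d_{\pi(n)} = \abs{d_{\pi(n)}}$ (taking $\alpha_n = 1$ if $d_{\pi(n)} = 0$) and set $f_n := \conj{\alpha}_n\, e_{\pi(n)}$. Injectivity of $\pi$ makes $\set{e_{\pi(n)}}_{n=1}^m$ an orthonormal system, and since $\abs{\alpha_n} = 1$, so is $\set{f_n}_{n=1}^m$. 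The short calculation
\begin{equation*}
  \sum_{n=1}^m \abs{d_n}^{*} = \sum_{n=1}^m \alpha_n \langle T e_{\pi(n)}, e_{\pi(n)}\rangle = \sum_{n=1}^m \langle T e_{\pi(n)}, f_n\rangle \le \sum_{n=1}^m s_n(T)
\end{equation*}
then finishes the job by applying the Ky Fan principle to the last expression, and since $m \in \nats$ was arbitrary, $s(E(T)) \pprec s(T)$ follows.

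Before comparing sequences, one should also verify that $E(T)$ is compact, so that $s(E(T))$ genuinely lies in $\czstar$: compactness of $T$ forces $\norm{T e_j} \to 0$ along every orthonormal sequence, hence $\abs{d_j} \to 0$, which suffices. The only real obstacle is the Ky Fan maximum principle itself; granted that, everything reduces to bookkeeping that exhibits the monotone rearrangement of the diagonal as an admissible candidate in the supremum defining the Ky Fan $m$-norm.
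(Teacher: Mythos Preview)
The paper does not supply its own proof of this statement; \Cref{thm:fans-theorem} is quoted as a known result from Fan's 1951 paper and is subsequently used as a black box (notably inside the proof of \Cref{thm:fans-theorem-pinching}). So there is nothing in the paper to compare your argument against.

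That said, your argument is correct. The only point worth flagging is the one you already raise yourself: the Ky Fan maximum principle
\[
\sum_{n=1}^m s_n(T) = \sup\Bigl\{\, \sum_{n=1}^m \abs{\langle T x_n, y_n\rangle} : \{x_n\}_{n=1}^m,\ \{y_n\}_{n=1}^m \text{ orthonormal} \,\Bigr\}
\]
and Fan's submajorization theorem are two faces of the same coin, so invoking the former to prove the latter is close to circular unless you indicate an independent derivation. Your parenthetical remark that it follows from the minimax/Courant--Fischer description of singular values plus polar decomposition is exactly the right escape hatch; with that granted, the bookkeeping you give (choosing $\pi$, the phases $\alpha_n$, and the rotated system $f_n = \conj{\alpha}_n e_{\pi(n)}$) is clean and complete. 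The compactness check for $E(T)$ is also fine.
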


The submajorization order features prominently in operator theory, but especially in the theory of diagonals of operators and in the related theory of operator ideals in $\B(\Hil)$.

For the reader's convenience we briefly review the basics of ideal theory.
Let $\czstar$ denote the convex cone of nonnegative nonincreasing sequences converging to zero.
To an ideal $\I$, Schatten \cite{Sch-1970}, in a manner quite similar to Calkin \cite{Cal-1941-AoM2}, associated the convex subcone $\charset(\I) := \setb{ s(T) \in \czstar }{ T \in \I }$, called the \term{characteristic set} of $\I$, which satisfies the properties:
\begin{enumerate}
\item If $\seq{a_n} \le \seq{b_n}$ (pointwise) and $\seq{b_n} \in \charset(\I)$, then $\seq{a_n} \in \charset(\I)$;
  that is, $\charset(\I)$ is a \term{hereditary subcone} of $\czstar$ with respect to the usual pointwise ordering.
\item If $\seq{a_n} \in \charset(\I)$, then $\seq{a_{\ceil{\frac{n}{2}}}} \in \charset(\I)$;
  that is, $\charset(\I)$ is closed under \term{$2$-ampliations}.
\end{enumerate}
Likewise, if $S$ is a hereditary (with respect to the pointwise order) convex subcone of $\czstar$ which is closed under $2$-ampliations, then $\I_S := \setb{ T \in \K(\Hil) }{ s(T) \in S }$ is an ideal of $\B(\Hil)$.
Finally, the maps $S \mapsto \I_S$ and $\I \mapsto \charset(\I)$ are inclusion-preserving inverses between the classes of $\B(\Hil)$-ideals and characteristic subsets of $\czstar$.

Ideals whose characteristic sets are also hereditary subcones with respect to the submajorization order (i.e., $B \in \I$ and $s(A) \pprec s(B)$ implies $A \in \I$) were introduced by Dykema, Figiel, Weiss and Wodzicki\fnmark{dfww} in \cite{DFWW-2004-AM} and are said to be \label{def:am-closed}\term{arithmetic mean closed}\footnotemark{} (abbreviated as \term{am-closed}).
Given an ideal $\I$, the smallest am-closed ideal containing $\I$ is called the am-closure, denoted $\amclosure{\I}$, and its characteristic set consists simply of the hereditary closure (with respect to the submajorization order) of $\charset(\I)$.
That is,
\begin{equation*}
  \charset\paren1{\amclosure{\I}} = \setb1{ \seq{a_n} \in \czstar }{ \exists \seq{b_n} \in \charset(\I), \seq{a_n} \pprec \seq{b_n} }.
\end{equation*}

In general, ideals are not am-closed.
Indeed, the sequence $\seq{1,0,0,\dots}$ corresponding to a rank-1 projection $P$ submajorizes any (nonnegative) sequence $\seq{a_n}$ whose sum is at most $1$.
Consequently, if $T \in \traceclass$, the trace class, then $s(T) \pprec s(\trace(\abs{T})P)$.
Therefore, since any ideal $\I$ contains the finite rank operators, if it is am-closed it must also contain the trace class $\traceclass$.
Additionally, it is immediate that $\traceclass$ is am-closed, making it the minimum am-closed ideal.

\footnotetext[\arabic{dfww}]{%
  The description given \cite{DFWW-2004-AM} is not in terms of the submajorization order, but these two definitions are easily shown to be equivalent.
  Instead, for an ideal $\I$, \cite{DFWW-2004-AM} defines the \term{arithmetic mean ideal} $\I_a$ and \term{pre-arithmetic mean ideal} ${}_a\I$ whose characteristic sets are given by
  \begin{gather*}
    \charset(\I_a) := \setb*{ \seq{a_n} \in \czstar }{ \exists \seq{b_n} \in \charset(\I), a_n \le \frac{1}{n} \sum_{k=1}^n b_k } \\
    \charset({}_a\I) := \setb*{ \seq{a_n} \in \czstar }{ \exists \seq{b_n} \in \charset(\I), \frac{1}{n} \sum_{k=1}^n a_k \le b_n } 
  \end{gather*}
  Then the \term{arithmetic mean closure} of $\I$ is $\amclosure{\I} := {}_a(\I_a)$, and $\I$ is called \term{am-closed} if $\I = \amclosure{\I}$.
  This viewpoint also allows one to define the \term{arithmetic mean interior} $({}_a\I)_a$, and one always has the inclusions ${}_a\I \subseteq ({}_a\I)_a \subseteq \I \subseteq {}_a(\I_a) \subseteq \I_a$.
}

\footnotetext{%
  Although am-closed ideals were introduced in this generality by \cite{DFWW-2004-AM}, they had been studied at least as early as \cite{GK-1969-ITTTOLNO,Rus-1969-FAA}, but only in the context of \term{symmetrically normed ideals}.
  In the study of symmetrically normed ideals by Gohberg and Krein \cite{GK-1969-ITTTOLNO}, they only considered those which were already am-closed, but they did not have any terminology associated to this concept.
  
  Around the same time, both Mityagin \cite{Mit-1964-IANSSM} and Russu \cite{Rus-1969-FAA} concerned themselves with the existence of so-called \term{intermediate} symmetrically normed ideals, which are necessarily not am-closed, or in the language of Russu, do not possess the \term{majorant property}.
  In \cite{Rus-1969-FAA}, Russu also established that the majorant property is equivalent to the \term{interpolation property} studied by Mityagin \cite{Mit-1965-MSNS} and Calder\'on \cite{Cal-1966-SM}.
  
  In the modern theory of symmetrically normed ideals, those which are am-closed (equivalently, have the majorant or interpolation properties), are said to be \term{fully symmetric}, but this term also implies the norm preserves the submajorization order.
  For more information on fully symmetrically normed ideals and related topics, we refer the reader to \cite{LSZ-2013-STTAA}.%
}

Arithmetic mean closed ideals are important within the lattice of operator ideals not least for their connection to Fan's theorem, but also because of the following sort of converse due to the second author with Kaftal.

\begin{theorem}[\protect{\cite[Corollaries~4.4,~4.5]{KW-2011-IUMJ}}]
  \label{thm:diagonal-invariance}
  For an operator ideal $\I$, and the canonical conditional expectation $E : \B(\Hil) \to \mathcal{D}$ onto a masa of diagonal operators,
  \begin{equation*}
    E(\I) = \amclosure{\I} \cap \mathcal{D}.
  \end{equation*}
  Consequently, $\I$ is am-closed if and only if $E(\I) \subseteq \I$.
\end{theorem}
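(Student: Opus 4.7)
The plan is to prove the two set-theoretic inclusions separately and then deduce the consequence. The forward inclusion $E(\I) \subseteq \amclosure{\I} \cap \mathcal{D}$ is an immediate consequence of Fan's theorem (\Cref{thm:fans-theorem}): for $T \in \I$, we have $E(T) \in \mathcal{D}$ by construction, and $s(E(T)) \pprec s(T)$. Since $s(T) \in \charset(\I)$ and $\charset(\amclosure{\I})$ is the submajorization-hereditary closure of $\charset(\I)$, we conclude $E(T) \in \amclosure{\I}$.

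For the reverse inclusion $\amclosure{\I} \cap \mathcal{D} \subseteq E(\I)$, let $D \in \amclosure{\I} \cap \mathcal{D}$. Polar decomposition $D = U\abs{D}$ gives $U \in \mathcal{D}$ (since $D$ is diagonal), and the bimodule property $E(UT') = UE(T')$ lets us reduce to the case $D \ge 0$. Conjugation by a permutation unitary (which preserves both $\mathcal{D}$ and $\I$) further reduces to $D = \diag(d)$ with $d \in \czstar$. By the definition of $\amclosure{\I}$, there exists $B \in \I$ with $d \pprec s(B)$. Setting $s := s(B)$ and noting $\diag(s) \in \I$, it suffices to produce a compact operator $T$ with $E(T) = \diag(d)$ and $s(T) \le s$ pointwise, the latter ensuring $T \in \I$ by hereditariness of $\charset(\I)$.

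The main obstacle is this construction, which is an infinite-dimensional Schur--Horn-type realization: given $d \pprec s$ in $\czstar$, produce a compact positive operator with diagonal $d$ and singular values pointwise dominated by $s$. The strategy is to partition $\nats$ into consecutive finite blocks chosen greedily so that the submajorization inequalities between $d$ and $s$ restrict (after absorbing a running deficit into the next block) to majorization inequalities on each block; applying the classical finite-dimensional Schur--Horn theorem on each block via iterated $2\times 2$ unitary rotations and taking the resulting block-diagonal direct sum then yields the desired $T$. Equivalently, one may construct a unitary $U$ encoding an orthostochastic realization of an equality $d = As$ for a doubly substochastic matrix $A$, so that $T := U \diag(s) U^{*}$ has diagonal $d$ and singular values exactly $s$. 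Handling the infinite-dimensional subtleties of orthostochastic realization (or equivalently of the greedy block partition when $s \notin \ell^1$) is the delicate technical point.

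The consequence that $\I$ is am-closed if and only if $E(\I) \subseteq \I$ then follows easily. If $\I = \amclosure{\I}$, then $E(\I) = \amclosure{\I} \cap \mathcal{D} \subseteq \I$. Conversely, if $E(\I) \subseteq \I$ and $T \in \amclosure{\I}$, then $D := \diag(s(T))$ satisfies $s(D) = s(T)$ and thus lies in $\amclosure{\I} \cap \mathcal{D} = E(\I) \subseteq \I$; hence $s(T) = s(D) \in \charset(\I)$, giving $T \in \I$.
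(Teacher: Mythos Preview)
The paper does not give its own proof of this theorem; it is cited from \cite{KW-2011-IUMJ}, and the paper only remarks that the forward inclusion follows from Fan's theorem (\Cref{thm:fans-theorem}). So there is no in-paper argument to compare against, and your proposal must stand on its own.

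Your forward inclusion and your deduction of the final equivalence are both correct.

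For the reverse inclusion, your reductions to $D = \diag(d)$ with $d \in \czstar$ and $d \pprec s$ for some $s \in \charset(\I)$ are fine, and you correctly identify that the crux is an infinite-dimensional Schur--Horn realization. However, the specific strategy you propose---a greedy partition of $\nats$ into \emph{consecutive finite} blocks with finite Schur--Horn on each---fails in general. Take $s = (1,0,0,\ldots)$ and $d = (2^{-n})_{n \ge 1}$, so $d \pprec s$. Any positive operator that is block diagonal for a nontrivial consecutive partition and has diagonal $d$ must have at least one positive eigenvalue per block (each block has positive trace), hence at least two positive singular values, so $s(T) \not\le s$. The ``running deficit'' cannot be absorbed into later blocks because $s$ vanishes there. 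The only $T$ with diagonal $d$ and $s(T) \le s$ is rank one, e.g.\ $x \otimes x$ with $x_n = 2^{-n/2}$, which is not block diagonal. Note also that if $\I = \finrank(\Hil)$, any infinite consecutive-block construction yields infinite rank, so even relaxing $s(T) \le s$ to $s(T) \in \charset(\I)$ does not rescue the scheme.

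Your alternative---realizing $d$ as the diagonal of $U\diag(s)U^{*}$ via an infinite orthostochastic matrix---is the right idea and is essentially what Kaftal--Weiss do, but this \emph{is} the infinite-dimensional Schur--Horn theorem for compact operators, and you have not supplied the construction; it is also not ``equivalent'' to the block scheme, as the example above shows. So the reverse inclusion remains a genuine gap: you have correctly located the difficulty but not closed it.
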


They used the term \term{diagonal invariance} to refer to $E(\I) \subseteq \I$, and so $\I$ is am-closed if and only if it is diagonally invariant.
The reader should note that the inclusion $E(\I) \subseteq \amclosure{\I} \cap \mathcal{D}$ is a direct consequence of Fan's theorem, when viewed through the lens of \Cref{thm:fans-theorem}, so the new content of \Cref{thm:diagonal-invariance} lies primarily in the reverse inclusion.

At this point, we note an important contrapositive consequence of \Cref{thm:bandable} and \Cref{thm:diagonal-invariance}.
Suppose $T$ is positive and $\ideal{T}$ is not am-closed, then by \Cref{thm:diagonal-invariance} there is some basis in which the main diagonal of $T$ does not lie in $\ideal{T}$, and therefore by \Cref{thm:bandable}, $T$ is not a band operator in this basis.

The next theorem, due originally to Gohberg--Krein \cite[Theorems~II.5.1 and III.4.2]{GK-1969-ITTTOLNO}, bootstraps \Cref{thm:fans-theorem} to apply to conditional expectations onto block diagonal algebras instead of simply diagonal masas.
We include this more modern proof both for completeness and to make the statement accord with that of \Cref{thm:fans-theorem}.

\begin{theorem}
  \label{thm:fans-theorem-pinching}
  Let $\mathcal{P} = \set{P_n}_{n \in \ints}$ be a block decomposition and consider the associated conditional expectation $\ep : \B(\Hil) \to \bigoplus_{n \in \ints} P_n \B(\Hil) P_n$ defined by $\ep(T) := T_0 = \sum_{n \in \ints} P_n T P_n$.
  If $T \in \K(\Hil)$, then $s(\ep(T))$ is submajorized by $s(T)$, i.e.,
  \begin{equation*}
    s(\ep(T)) \pprec s(T).
  \end{equation*}
  Moreover, if $T \in \I$, then $\ep(T) \in \amclosure{\I}$.
  In addition, if $s(\ep(T)) = s(T)$, then $\ep(T) = T$.
\end{theorem}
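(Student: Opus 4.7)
The plan is to establish the submajorization via a unitary-averaging argument on finite pinchings, then deduce the ideal containment directly from the definition of $\amclosure{\I}$; the equality assertion will be handled separately via the Kadison--Schwarz inequality for the conditional expectation $\ep$.

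For the submajorization, I would introduce the finite pinchings $E_N(T) := \sum_{|n| \leq N} P_n T P_n + Q_N T Q_N$ with $Q_N := I - \sum_{|n|\leq N} P_n$. Each $E_N$ is a conditional expectation onto a $(2N+2)$-block diagonal subalgebra, and is thus a uniform average of unitary conjugations of $T$ by roots-of-unity phase unitaries commuting with this subalgebra. Unitary invariance of the Ky Fan $m$-norms, combined with the triangle inequality, then gives $\|E_N(T)\|_{(m)} \leq \|T\|_{(m)}$ for all $m,N$. Compactness of $T$ delivers the limit: $\|Q_N T Q_N\| \leq \|T Q_N\| \to 0$ (compact times strongly null), and $\|\sum_{|n|>N} P_n T P_n\| = \sup_{|n|>N}\|P_n T P_n\| \to 0$ since $\|P_n T P_n\| \leq \|T P_n\| \to 0$; hence $E_N(T) \to \ep(T)$ in operator norm, and norm-continuity of singular values yields $s(\ep(T)) \pprec s(T)$. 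Since $T \in \I$ forces $s(T) \in \charset(\I)$, the hereditary-under-$\pprec$ characterization of $\charset(\amclosure{\I})$ on \cpageref{def:am-closed} immediately yields $\ep(T) \in \amclosure{\I}$.

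For the equality case, the Kadison--Schwarz inequality applied to the completely positive map $\ep$ gives $|\ep(T)|^2 \leq \ep(|T|^2)$, two positive compact operators (compactness of $\ep(|T|^2)$ being ensured by the same tail-norm argument as above applied to $|T|^2$). By hypothesis $s_k(|\ep(T)|^2) = s_k(\ep(T))^2 = s_k(T)^2$ for all $k$, while the submajorization just proved, now applied to $|T|^2$, yields $\sum_{k=1}^m s_k(\ep(|T|^2)) \leq \sum_{k=1}^m s_k(T)^2$. Sandwiching the pointwise inequality $s_k(|\ep(T)|^2) \leq s_k(\ep(|T|^2))$ (Weyl monotonicity for positive operators) between these coinciding extremal sums forces term-by-term equality of the eigenvalue sequences of $|\ep(T)|^2$ and $\ep(|T|^2)$.

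A short lemma then closes the proof: if $0 \leq A \leq B$ are compact operators with identical eigenvalue sequences, then $A = B$. Picking an orthonormal eigenbasis $\{w_k\}$ of $A$ with eigenvalues $\lambda_k$ and the rank-$N$ projection $P_{W_N}$ onto the span of its first $N$ vectors, the chain $\sum_{k\leq N}\lambda_k = \trace(AP_{W_N}) \leq \trace(BP_{W_N}) \leq \sum_{k\leq N}\lambda_k$ (the final bound by Ky Fan for $B$) collapses to equality, forcing $(B-A)w_k = 0$ for each $k$; on $\ker A$ the restriction $B|_{\ker A}$ is positive compact whose nonzero eigenvalues would augment those of $A$, contradicting the eigenvalue matching, so $B|_{\ker A} = 0$. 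Applied to $|\ep(T)|^2 \leq \ep(|T|^2)$, the lemma yields $\ep(T^*T) = |\ep(T)|^2$; a direct expansion of this difference equals $\sum_{m\neq n}(P_m T P_n)^*(P_m T P_n)$, a sum of positive operators, whose vanishing forces $P_m T P_n = 0$ for all $m \neq n$, so $T = \ep(T)$. The chief obstacle is this equality step: recognizing that Kadison--Schwarz couples $\ep(T^*T)$ with $|\ep(T)|^2$ in exactly the way that lets one extract the vanishing of every off-diagonal block from a single positive-compact equality.
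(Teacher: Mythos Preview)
Your proof is correct, and both parts take a genuinely different route from the paper's.

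For the submajorization, the paper reduces to Fan's theorem (\Cref{thm:fans-theorem}) for a diagonal masa $\mathcal{D} \subseteq \bigoplus_n P_n \B(\Hil) P_n$: by applying the Schmidt decomposition blockwise one finds partial isometries $U,V$ in the block-diagonal algebra with $s(\ep(T)) = s(E(UTV)) \pprec s(UTV) \le s(T)$. Your argument instead realizes each finite pinching $E_N$ as a convex combination of unitary conjugates, invokes convexity of the Ky Fan norms, and passes to the norm limit using compactness. Your approach is self-contained (it does not require Fan's theorem as an input), at the cost of the limiting step; the paper's is shorter once Fan's theorem is in hand.

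For the equality case, the paper works directly with an orthonormal eigensequence $\{e_n\}$ of $\ep(T)^*\ep(T)$ chosen inside the blocks, shows via the inequality $\ep(X)^*\ep(X) \le \ep(X^*X)$ (your Kadison--Schwarz step) applied to $X = TQ_n$ that $TQ_n = \ep(T)Q_n$ for every $n$, and then controls the tails $\norm{TQ_n^\perp}, \norm{\ep(T)Q_n^\perp}$ separately. Your route is more conceptual: you combine Kadison--Schwarz with the submajorization (now applied to $\abs{T}^2$) to force the eigenvalue sequences of $\abs{\ep(T)}^2$ and $\ep(\abs{T}^2)$ to coincide, then invoke the clean lemma that $0 \le A \le B$ compact with identical eigenvalue sequences implies $A = B$. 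This is arguably tidier, and in fact the paper's \Cref{rem:T=ep(T)-hilbert-schmidt} records essentially your argument in the special case $T \in \hilbertschmidt$, where the trace replaces your eigenvalue-matching lemma; your version shows that the same idea goes through for arbitrary compact $T$ once that lemma is isolated.
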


\begin{proof}
  Suppose that $\mathcal{D}$ is a diagonal masa contained in the algebra $\bigoplus_{n \in \ints} P_n \B(\Hil) P_n$, and let $E : \B(\Hil) \to \mathcal{D}$ be the associated canonical trace-preserving conditional expectation.
  Because of the algebra inclusions, we see that $E \circ \ep = E$.

  Let $T \in \K(\Hil)$ and consider $\ep(T)$.
  By applying the Schmidt decomposition to each $P_n T P_n$ one obtains partial isometries $U_n, V_n$ (the latter may even be chosen unitary) in $P_n \B(\Hil) P_n$ so that $U_n P_n T P_n V_n$ is a positive operator in $\mathcal{D}$.
  Then $U := \bigoplus_{n \in \ints} U_n, V := \bigoplus_{n \in \ints} V_n$ are partial isometries for which $s((E(U \ep(T) V)) = s(\ep(T))$.
  Then since $U,V \in \bigoplus_{n \in \ints} P_n \B(\Hil) P_n$ they commute with the conditional expectation $\ep$ and hence
  \begin{equation*}
    s(\ep(T)) = s((E(U \ep(T) V)) = s(E(\ep(UTV))) = s(E(UTV)).
  \end{equation*}
  By Fan's theorem (\Cref{thm:fans-theorem}), $s(E(UTV)) \pprec s(UTV) \le \norm{U} s(T) \norm{V} = s(T)$, and therefore $s(\ep(T)) \pprec s(T)$.
  Finally, this fact along with the definition of the arithmetic mean closure guarantees $T \in \I$ implies $\ep(T) \in \amclosure{\I}$.

  For the case of equality, now suppose that $s(\ep(T)) = s(T)$.
  Let $\set{e_n}_{n \in \nats}$ be an orthonormal sequence of eigenvectors of $\ep(T)^{*} \ep(T)$, each of which is inside one of the subspaces $P_j \Hil$,  satisfying $\ep(T)^{*} \ep(T) e_n = s_n(T)^2 e_n$.
  Then the projections $Q_n$ onto $\spans\set{e_1,\ldots,e_n}$ commute with each $P_j$, and hence also with the expectation $\ep$.
  We note for later reference that
  \begin{equation}
    \label{eq:epTQnperp}
    \norm{\ep(T)Q_n^{\perp}}^2 = \norm{Q_n^{\perp}\ep(T)^{*}\ep(T) Q_n^{\perp}} \le s_{n+1}(\ep(T))^2.
  \end{equation}

  Observe that for any operator $X$, because $P_j X^{*} P_j X P_j \le P_j X^{*} X P_j$
  \begin{equation}
    \label{eq:epX}
    \ep(X)^{*}\ep(X) = \sum_{j \in \ints} P_j X^{*} P_j X P_j \le \sum_{j \in \ints} P_j X^{*} X P_j = \ep(X^{*}X),
  \end{equation}
  with equality if and only if $P_j X^{*} P_j^{\perp} X P_j = 0$ for all $j \in \ints$ if and only if $P_j^{\perp} X P_j = 0$ for all $j \in \ints$ if and only if $X = \ep(X)$.

  Applying \eqref{eq:epX} to $X = TQ_n$,
  \begin{align*}
    \sum_{j=1}^n s_j(\ep(T))^2 &= \trace(Q_n \ep(T)^{*}\ep(T) Q_n) \\
                               &= \trace(\ep(TQ_n)^{*} \ep(TQ_n)) \\
                               &\le \trace(\ep(Q_n T^{*} T Q_n)) \\
                               &= \trace(Q_n T^{*} T Q_n) \le \sum_{j=1}^n s_j(T)^2, 
  \end{align*}
  where the last inequality follows from \Cref{thm:fans-theorem}.
  We must have equality throughout since $s(\ep(T)) = s(T)$.
  Consequently, $TQ_n = \ep(TQ_n) = \ep(T)Q_n$ for all $n \in \nats$ by the equality case of \eqref{eq:epX}.

  By construction, $\norm{\ep(T)Q_n^{\perp}} \to 0$ as $n \to \infty$, but we also claim
  \begin{equation}
    \label{eq:TQnperp}
    \norm{TQ_n^{\perp}} \le s_{n+1}(T).
  \end{equation}

  Suppose not.
  Then we could find some unit vector $x \in Q_n^{\perp} \Hil$ with $\innerprod{T^{*}T x}{x} = \norm{T x}^2 > s_{n+1}(T)^2$, and therefore, for the projection $R = Q_n + (x \otimes x)$,
  \begin{equation*}
    \trace(RT^{*}TR) = \trace(Q_n T^{*}T Q_n) + \innerprod{T^{*}T x}{x} > \sum_{j=1}^{n+1} s_j(T)^2,
  \end{equation*}
  contradicting the fact that, because $R$ is a projection of rank $n+1$, by \Cref{thm:fans-theorem}
  \begin{equation*}
    \trace(RT^{*}TR) \le \sum_{j=1}^{n+1} s_j(RT^{*}TR) \le \sum_{j=1}^{n+1} s_j(T)^2.
  \end{equation*}

  Finally, again noting that $TQ_n = \ep(T)Q_n$,
  \begin{equation*}
    0 \le \norm{T - \ep(T)} \le \norm{T - TQ_n} + \norm{\ep(T)Q_n - \ep(T)} = \norm{TQ_n^{\perp}} + \norm{\ep(T)Q_n^{\perp}}.
  \end{equation*}
  Since $\norm{TQ_n^{\perp}} \le s_{n+1}(T)$ by \eqref{eq:TQnperp} and $\norm{\ep(T)Q_n^{\perp}} \le s_{n+1}(\ep(T))$ by \eqref{eq:epTQnperp}, the right-hand side converges to zero as $n \to \infty$.
  Therefore, $\norm{T - \ep(T)} = 0$ and hence $T = \ep(T)$.
\end{proof}.

\begin{remark}
  \label{rem:T=ep(T)-hilbert-schmidt}
  When $T$ is Hilbert--Schmidt, the proof that $s(\ep(T)) = s(T)$ implies $\ep(T) = T$ may be shortened considerably.
  In particular, $T^{*}T, \ep(T)^{*}\ep(T)$ are trace-class with $s(T^{*}T) = s(\ep(T)^{*}\ep(T))$ and so $\trace(T^{*}T) = \trace(\ep(T)^{*}\ep(T))$.
  Since the expectation $\ep(T)$ is trace-preserving,
  \begin{align*}
    \trace(\ep(T^{*}T) - \ep(T)^{*}\ep(T)) &= \trace(\ep(T^{*}T - \ep(T)^{*}\ep(T))) \\
    &= \trace(T^{*}T - \ep(T)^{*}\ep(T)) = 0.
  \end{align*}
  Since $\ep(T^{*}T) - \ep(T)^{*}\ep(T)$ is a positive operator by \eqref{eq:epX} and the trace is faithful, we must have $\ep(T^{*}T) = \ep(T)^{*}\ep(T)$, and hence $T = \ep(T)$ by the equality case of \eqref{eq:epX}.
\end{remark}

\begin{remark}
  Fan's theorem (\Cref{thm:fans-theorem}) is a special case of \Cref{thm:fans-theorem-pinching} by selecting the projections $P_n$ to have rank one, and therefore $E = \ep$.
  As we need \Cref{thm:fans-theorem} to prove \Cref{thm:fans-theorem-pinching}, this doesn't provide an independent proof of Fan's theorem.
\end{remark}

Our second main theorem says that there is nothing special about the main diagonal $T_0$: for all $k \in \ints$, $s(T_k) \pprec s(T)$;
Moreover, this holds even for \emph{asymmetric} shift decompositions.

\begin{theorem}
  \label{thm:sum-off-diagonal-corners-am-closure}
  Suppose that $\set{P_n}_{n \in \ints}, \set{Q_n}_{n \in \ints}$ are block decompositions and let $T \in \K(\Hil)$ with asymmetric shift decomposition $\set{T_k}_{k \in \ints}$.
  Then $s(T_k) \pprec s(T)$.
  Consequently, if $T$ lies in some ideal $\I$, then $T_k \in \amclosure{\I}$.
\end{theorem}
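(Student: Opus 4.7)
The plan is to reduce the statement to the symmetric main-diagonal case handled by \Cref{thm:fans-theorem-pinching} via a Hilbert-space doubling trick. On $\Hil \oplus \Hil$ consider
\begin{equation*}
  S := \begin{pmatrix} 0 & T \\ 0 & 0 \end{pmatrix}.
\end{equation*}
Because $S^{*}S = 0 \oplus T^{*}T$, one has $s(S) = s(T)$, and $S$ is compact.

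The key step is the choice of block decomposition of $\Hil \oplus \Hil$: set $R_n := Q_n \oplus P_{n+k}$ for each $n \in \ints$. These projections are mutually orthogonal because each of the families $\set{Q_n}$ and $\set{P_n}$ is, and they sum to $I_{\Hil} \oplus I_{\Hil}$ since $\set{P_n}, \set{Q_n}$ are block decompositions of $\Hil$. Hence $\set{R_n}_{n \in \ints}$ is a block decomposition of $\Hil \oplus \Hil$. A routine $2\times 2$ block multiplication then gives
\begin{equation*}
  R_n S R_n = \begin{pmatrix} 0 & Q_n T P_{n+k} \\ 0 & 0 \end{pmatrix},
\end{equation*}
so that, writing $\ep$ for the conditional expectation associated to $\set{R_n}$,
\begin{equation*}
  \ep(S) = \sum_{n \in \ints} R_n S R_n = \begin{pmatrix} 0 & T_k \\ 0 & 0 \end{pmatrix},
\end{equation*}
and therefore $s(\ep(S)) = s(T_k)$.

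Applying \Cref{thm:fans-theorem-pinching} to $S \in \K(\Hil \oplus \Hil)$ with the block decomposition $\set{R_n}$ now yields $s(T_k) = s(\ep(S)) \pprec s(S) = s(T)$, proving the submajorization claim. For the consequence about ideals, assume $T \in \I$, so $s(T) \in \charset(\I)$; then by the description of $\charset\paren1{\amclosure{\I}}$ as the submajorization-hereditary closure of $\charset(\I)$, we have $s(T_k) \in \charset\paren1{\amclosure{\I}}$, and since $T_k$ is compact by \Cref{def:shift-representation}, this means $T_k \in \amclosure{\I}$.

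The only real ``obstacle'' is in seeing how to set up the reduction: the more naive attempt to align the asymmetric corners $Q_n T P_{n+k}$ with symmetric corners of some conjugate $U^{*}TV$ requires the ranks of $Q_n$ and $P_{n+k}$ to agree for each $n$, which fails in general. The doubling trick sidesteps this rank mismatch entirely by encoding an asymmetric corner on $\Hil$ as a symmetric corner on $\Hil \oplus \Hil$, after which \Cref{thm:fans-theorem-pinching} does all the analytic work.
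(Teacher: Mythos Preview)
Your proof is correct and takes a genuinely different route from the paper's. The paper stays on $\Hil$: after reducing to $k=0$, it takes the polar decomposition $Q_n T P_n = U_n \abs{Q_n T P_n}$ for each $n$, sets $U = \sum_n U_n$, and checks that $\ep(U^{*}T) = \abs{T_0}$ for the expectation $\ep$ associated to $\set{P_n}$; then \Cref{thm:fans-theorem-pinching} gives $s(T_0) = s(\ep(U^{*}T)) \pprec s(U^{*}T) \le s(T)$. Your doubling trick sidesteps the polar-decomposition computation entirely by encoding the asymmetric corner $Q_n T P_{n+k}$ as a symmetric corner $R_n S R_n$ on $\Hil \oplus \Hil$, which is arguably cleaner and more transparent. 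One small correction to your closing commentary: the ``naive'' one-sided conjugation you dismiss is essentially what the paper does, and it succeeds without any rank matching because the $U_n$ are the partial isometries coming from the polar decompositions (mapping a subspace of $P_n\Hil$ into $Q_n\Hil$) rather than unitaries between $P_n\Hil$ and $Q_n\Hil$.
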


\begin{proof}
  It suffices to prove the theorem for $T_0$ since $T_k$ is simply $T_0$ relative to the translated block decomposition pair $\set{P_{n+k}}_{n \in \ints}, \set{Q_n}_{n \in \ints}$.

  Each $Q_n T P_n$ has the polar decomposition $Q_n T P_n = U_n \abs{Q_n T P_n}$  where $U_n$ is a partial isometry\footnotemark{} with $Q_n U_n = U_n = U_n P_n$.
  Then $U := \sum_{n \in \ints} U_n$ converges in the strong operator topology since the collections $\set{P_n}_{n \in \ints}, \set{Q_n}_{n \in \ints}$ are each mutually orthogonal and hence also $U$ is a partial isometry.
  Moreover,
  \begin{equation*}
    T_0^{*} T_0 = \paren*{ \sum_{n \in \ints} P_n T^{*} Q_n } \paren*{ \sum_{m \in \ints} Q_m T P_m } = \sum_{n \in \ints} \abs{Q_n T P_n}^2.
  \end{equation*}
  Since the operators $\abs{Q_n T P_n}^2$ are orthogonal (i.e., their products are zero), $\abs{T_0} = (T_0^{*}T_0)^{\frac{1}{2}} = \sum_{n \in \ints} \abs{Q_n T P_n}$.
  Thus,
  \begin{align*}
    \ep(U^{*}T) &= \sum_{n \in \ints} P_n U^{*} T P_n = \sum_{n \in \ints} \paren*{ \sum_{m \in \ints} P_n U^{*}_m T P_n }  \\
             &= \sum_{n \in \ints} \paren*{ \sum_{m \in \ints} P_n P_m U^{*}_m Q_m T P_n } = \sum_{n \in \ints} U^{*}_n Q_n T P_n  \\
             &= \sum_{n \in \ints} \abs{Q_n T P_n} = \abs{T_0}.
  \end{align*}
  Finally, by \Cref{thm:fans-theorem-pinching} and since $U^{*}$ is a contraction,
  \begin{equation*}
    s(T_0) = s(\abs{T_0}) = s(\ep(U^{*}T)) \pprec s(U^{*}T) \le s(T).
  \end{equation*}
  Therefore, if $T \in \I$, then $T_0 \in \amclosure{\I}$ by definition.
\end{proof}

\footnotetext{That $Q_n U_n = U_n = U_n P_n$ follows from well-known facts (e.g., see \cite[Theorem~I.8.1]{Dav-1996}) when $U_n$ is taken to be the canonical unique partial isometry on $\Hil$ mapping $\closure{\range(\abs{Q_n T P_n})} \to \closure{\range(Q_n TP_n)}$ and noting also the range projection of $Q_n T P_n$ is dominated by $Q_n$ and the projection onto $\closure{\range(\abs{Q_n T P_n})} = \ker^{\perp}(Q_n T P_n)$ is dominated by $P_n$.}

\begin{remark}
  In the previous theorem we assumed that $\set{P_n}_{n \in \ints}, \set{Q_n}_{n \in \ints}$ were block decompositions, but the condition that they sum to the identity is not actually necessary (the same proof given above still works).
  Therefore, if $\set{P_n}_{n \in \ints}, \set{Q_n}_{n \in \ints}$ are sequences of mutually orthogonal projections then $s(\sum_{n \in \ints} Q_n T P_n) \pprec s(T)$;
  consequently, if $T \in \I$ then we still have $\sum_{n \in \ints} Q_n T P_n \in \closure[\mathrlap{am}]{\I}$.
\end{remark}

\section*{Acknowledgments}

The authors would like to thank Fedor Sukochev for providing insight into the history of fully symmetric ideals.

\printbibliography

\end{document}